\setlist[enumerate]{itemsep=2pt,parsep=2pt,before={\parskip=2pt}}
\let\amsamp=&
\newcommand{\colim@}[2]{%
  \vtop{\m@th\ialign{##\cr
    \hfil$#1\operator@font colim$\hfil\cr
    \noalign{\nointerlineskip\kern1.5\ex@}#2\cr
    \noalign{\nointerlineskip\kern-\ex@}\cr}}%
}
\newtheorem{theorem}{Theorem}[section]
\newtheorem*{theorem*}{Theorem}
\newtheorem*{definition*}{Definition}
\newtheorem{proposition}[theorem]{Proposition}
\newtheorem{lemma}[theorem]{Lemma}
\newtheorem{corollary}[theorem]{Corollary}
\theoremstyle{definition}
\newtheorem{definition}[theorem]{Definition}
\newtheorem{remark}[theorem]{Remark}
\newtheorem{example}[theorem]{Example}
\newtheorem{construction}[theorem]{Construction}
\newtheorem{claim}[theorem]{Claim}
\renewcommand{\lim}{\varprojlim}
\newcommand{\coker}[1]{\text{coker}(#1)}
\renewcommand{\to}{\rightarrow}
\renewcommand{\over}[2]{\stackrel{#1}{#2}}
\newcommand{\cal}[1]{\mathcal{#1}}
\newcommand{\otimesl}{\otimes^{\textbf{L}}}
\newcommand{\bb}[1]{\mathbb{#1}}
\newcommand{\colim}[1]{\underset{#1}{\text{colim}} \;}
\begin{document}
\title{Faithfully flat maps ring maps are not descendable}
\author{Ivan Zelich}
\begin{abstract}We construct a faithfully flat algebra over the infinite polynomial ring on an algebraically closed field that is not descendable.
\end{abstract}
\maketitle
\section{Introduction}
The notion of descendability was introduced by Akhil Matthew \cite{DescendMatthew} building upon the work of Paul Balmer \cite{DescendBalmer}. For the purposes of this exposition, we shall think of a descendable map of rings $f: A \to B$ as a strong version of descent. To be more precise, $f$ is descendable if the map $\{A\}_n \to \{ \text{Tot}_n B^{\bullet}\}_n$ is a pro-isomorphism of $A$-modules where $B^{\bullet}$ is the derived cech nerve and on the other hand, for descent, one usually requires the weaker condition that $A \to \text{Rlim}B^{\bullet}$ is an isomorphism. For a more detailed study of descendability we refer the reader to \cite{DescendMatthew}.\\
\indent Descendability of a ring map has better stability properties than of descent; it behaves more closely to \textit{universal descent}, see \cite{BhattScholzeProj} for applications to algebraic geometry, and especially \cite[D.3.6.2]{LurieSGA} for an $\infty$-category viewpoint. In this paper, we address the problem of whether faithfully flat ring maps are descendable. In concrete terms, if $f: A \to B$ is a faithfully flat map of rings, we have an exact sequence $A \to B \to \coker{f}$ corresponding to an element $\eta \in \text{Ext}_A^{1}(\coker{f}, A)$. The map $f$ will be descendable precisely when there exists an integer $n$ such that $\eta^{\otimes_A n} =0$.\\
\indent When $A$ is a Noetherian ring of finite Krull dimension, then the question is actually simple; it follows from the fact that the projective dimension of flat modules is finite.\cite{GrusonFlat} Similarly, if $A$ is an $\aleph_n$-countable ring, then any faithfully flat ring map is descendable.\cite[D.3.5]{LurieSGA}.\\
\indent It is an open problem\footnote{See \cite[11.24]{BhattScholzeProj}, \cite[Prop. 3.32]{DescendMatthew}, \cite[D.3.3.4]{LurieSGA}} whether an arbitrary faithfully flat map of rings is descendable, we will show that this is \textbf{not} the case. In particular:
\begin{theorem}[See \ref{corollary:counter}]\label{intro1}
There exists a ring map $A \to B$ that is faithfully flat but not descendable. In fact, $A$ can be made to be equal to $k[x_1, x_2, ...]$ with $k$ an algebraically closed field and $|k|=\beth_{\infty+}$.
\end{theorem}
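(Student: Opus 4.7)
The plan is to produce the counterexample by reformulating descendability as tensor-nilpotence of an Ext class, and then constructing a faithfully flat $A \to B$ for which this class is provably not nilpotent. Faithful flatness of $A \to B$ forces the cokernel $M := B/A$ to be a flat $A$-module (the long exact $\text{Tor}$-sequence applied to $0 \to A \to B \to M \to 0$ gives $\text{Tor}^A_1(M,-)=0$ because $A \hookrightarrow B$ is pure), and the short exact sequence then determines $\eta \in \text{Ext}^1_A(M, A)$ as in the introduction. Descendability amounts to the vanishing of some Yoneda power $\eta^{\otimes n} \in \text{Ext}^n_A(M^{\otimes n}, A)$, and the task is to engineer $M$ — realised as the cokernel of an actual ring map — so that no such $n$ exists. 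Because $A=k[x_1,x_2,\ldots]$ has infinite Krull dimension, flat $A$-modules can have unbounded projective dimension, so the groups $\text{Ext}^n_A(M^{\otimes n}, A)$ are not forced to vanish; this is structurally why the finite-dimensional and countable cases recalled in the introduction are excluded.

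To produce such a $B$, I would build a transfinite chain $A=B_0 \subset B_1 \subset \cdots \subset B_\alpha \subset \cdots$ of faithfully flat $A$-subalgebras and set $B:=\bigcup_\alpha B_\alpha$. Each successor step $B_\alpha \to B_{\alpha+1}$ would be a flat polynomial-style extension chosen so as to adjoin data that creates a fresh non-trivial obstruction class in the appropriate Ext group, while not killing any previously created obstruction. The cardinality hypothesis $|k| = \beth_{\infty+}$ enters exactly here: it provides sufficiently many independent transcendentals over each stage to let the process run through an ordinal long enough to accumulate obstructions at every height $n$. Faithful flatness of the colimit is automatic since each stage is faithfully flat and faithful flatness is stable under filtered colimits, so the only verification at this level is that no closed point of $\text{Spec}(A)$ is lost at any individual step.

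The main obstacle, and the crux of the argument, is to show $\eta^{\otimes n}\neq 0$ for every $n$, since iterated tensor products of Ext classes generically collapse. My plan is to detect each $\eta^{\otimes n}$ by pairing against a carefully chosen test module $N_n$ — for instance a residue field along a generic chain of primes of height $n$, or a suitable Matlis dual — and to arrange the inductive construction so that the image of $\eta^{\otimes n}$ in $\text{Ext}^n_A(M^{\otimes n}, N_n)$ arises from data that is manifestly non-zero from the construction at stage $\alpha = n$. The large cardinality of $k$ is indispensable exactly here: it rules out the finiteness witnesses that would otherwise let one resolve $M^{\otimes n}$ by projectives of length less than $n$ against cofinally many test modules, ensuring the tower of obstructions cannot be truncated. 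Once non-vanishing is established at every level, the resulting $A \to B$ is the desired faithfully flat but non-descendable ring map and the corollary follows.
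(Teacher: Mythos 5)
Your reduction of descendability to the non-nilpotence of the class $\eta \in \text{Ext}^1_A(B/A, A)$ is the same starting point as the paper, and your observation that $B/A$ is flat by purity is correct. But from that point on there is a genuine gap: the entire mechanism for proving $\eta^{\otimes n} \neq 0$ for all $n$ --- which you correctly identify as the crux --- is absent. A transfinite chain of flat extensions ``chosen so as to adjoin data that creates a fresh obstruction while not killing any previously created obstruction'' is a restatement of what must be proved, not a method; and the proposed detection by pairing against residue fields along generic chains of primes of height $n$ has no visible reason to work, since $M^{\otimes n}$ is flat and nothing in your construction links a hypothetical splitting of $\eta^{\otimes n}$ to any particular prime or Matlis dual. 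Likewise the role you assign to $|k| = \beth_{\infty+}$ (``rules out the finiteness witnesses'') is asserted rather than derived; in the paper the cardinal bound enters for a precise combinatorial reason.

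The paper's argument supplies three concrete ingredients for which your sketch would need substitutes. First, a function $H: k \to k$ that is not a polynomial on any countably infinite subset of $k$, built by iterated adjunction of transcendentals followed by algebraic closures; $H$ defines the test map $h$ whose tensor powers detect $\eta^{\otimes n}$. Second, the Erd\H{o}s--Rado theorem: a hypothetical splitting of $\eta^{\otimes n}$ is encoded by polynomials indexed by points of $k^{\times n}$, and infinite Ramsey theory (this is exactly where the $\beth$-cardinals are needed) produces infinite subsets $S_1, \dots, S_n$ of $k$ on which those polynomials have uniformly bounded degree. Third, Lagrange-interpolation difference operators $\nabla^z$ annihilate the bounded-degree terms in the splitting identity and force $H$ to be polynomial on some infinite $S_i$, a contradiction. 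Finally, the module-level extensions are converted into honest faithfully flat ring maps by a colimit of symmetric powers, and infinitely many of them are assembled by an infinite tensor product rather than a transfinite union. None of these steps is routine, and without them the proposal does not constitute a proof.
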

This result is rather satisfactory considering the above discussion of the already known, positive results; note however that $\beth_{\infty} \ge \aleph_{\infty}$, so our bounds increase in a different manner unless the generalised continuum hypothesis is assumed.\\
\indent In a subsequent paper, we will prove the existence of other counterexamples by a more through investigation of the \textit{indivisible sequences} introduced in Section~\ref{indiv}.
\subsection*{Acknowledgements}
I would like to thank my advisor Aise Johan De Jong for our many helpful discussions and especially for suggesting to look into infinite Ramsey theory. I would also like to thank Mehtaab Sawhney for insightful discussions regarding Ramsey theory.
\newpage
\section{The counterexample}
\subsection{Preliminaries}
We fix some notation that will be used throughout this article.
\begin{enumerate}
\item[(i)] If $S$ is a set and $R$ a ring, define $\text{Hom}^{\text{fin}}_{\text{Set}}(S, R) \subset \text{Hom}_{\text{Set}}(S,R)$ to be the set-theoretic maps with finite support i.e. $f \in \text{Hom}^{\text{fin}}_{\text{Set}}(S, R)$ if there exists a subset $K \subset S$ with $|K| < \infty$ such that $f(S\setminus K) = 0$. For each $t \in S$, set $\delta_{s=t} \in \text{Hom}^{\text{fin}}_{\text{Set}}(S, R)$ to be the function having $\delta_{s=t}(s)=1$ if $s=t$ and $0$ else.
\item[(ii)]Further, if $S_i$ are sets indexed by the set $\{1,2,...,n\}$, then we will identify the vector $1_{s_1,s_2,...,s_n}$ in the nested direct sum $\oplus_{S_1} \oplus_{S_2} \oplus ... \oplus_{S_n} R$ as follows: Every vector in $v \in \oplus_{S_1} \oplus_{S_2} \oplus ... \oplus_{S_n} R$ can be identified as a function $f_{v} \in \text{Hom}^{\text{fin}}_{\text{Set}}(\prod_i S_i, R)$, and under this identification, $1_{s_1,...,s_n}$ corresponds to $\delta_{t=s}$ where $s=(s_1,s_2,...,s_n) \in \prod_i S_i$.
\item[(iii)]For any set $S$ and $n \in \bb{N}$, we let $\text{pr}_i:S^{\times n} \to S$ be the projection onto the $i^{\text{th}}$-coordinate and $p_i: S^{\times n} \to S^{\times n-1}$ be defined as follows: For any $s = (s_1,s_2,..,s_n) \in S^{\times n}$,
\[p_i(s) = (s_1,s_2,...,s_{i-1},s_{i+1},...,s_n).\]
For a point $s = (s_1,...,s_{n-1}) \in S^{\times n-1}$, we consider the degeneracy map $t_i^{s}: S \to S^{\times n}$ which takes a point $s' \in S$ to the point:
\[(s_1, ..., s_{i-1}, s', s_{i},...,s_{n-1}) \in S^{\times n}.\]
\end{enumerate}
\indent We also recall the following form of Lagrange's interpolation theorem.
\begin{theorem}[Lagrange interpolation]
Let $k$ be a field and $f \in k[x]$ a polynomial of degree $n$. Then there exists a universal polynomial $P \in k[x_1,...,x_n]$ such that for $z \in k^{\times n+1}$ we have:
\[\sum_{i=1}^{n+1} (-1)^iP(p_i(z))f(\text{pr}_i(z)) = 0.\]
Explicitely, $P = \prod_{ 1 \le i < j \le n} (x_j - x_i)$.
\end{theorem}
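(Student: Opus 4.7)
The plan is to realise the left-hand side as the cofactor expansion (along the bottom row) of a Vandermonde-like $(n+1)\times(n+1)$ determinant that vanishes for degree reasons, reducing the identity to the classical cofactor/Vandermonde calculus underlying Lagrange interpolation.

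Concretely, I would assemble the matrix $M(z)$ whose top $n$ rows evaluate the monomials $1, x, x^2, \ldots, x^{n-1}$ at the points $z_1, z_2, \ldots, z_{n+1}$ and whose bottom row is $(f(z_1), \ldots, f(z_{n+1}))$. Cofactor expansion along the bottom row gives
\[
\det M(z) \;=\; \sum_{i=1}^{n+1} (-1)^{n+1+i}\, f(z_i)\, V_i,
\]
where $V_i$ is the $n\times n$ Vandermonde determinant on the points $z_1, \ldots, \widehat{z_i}, \ldots, z_{n+1}$ taken in their induced order. Because these indices still appear in their natural ordering, $V_i = \prod_{1 \le a < b \le n+1,\ a,b \ne i}(z_b - z_a) = P(p_i(z))$, where $P$ is the Vandermonde polynomial in $n$ variables as in the statement.

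Next I would observe that $\det M(z) = 0$ by multilinearity and alternation: whenever $f$ lies in the $k$-span of $1, x, \ldots, x^{n-1}$, the bottom row is a $k$-combination of the rows above, forcing the determinant to vanish. (I read the degree hypothesis as placing $f$ in exactly this span, i.e.\ of degree at most $n-1$, since the literal reading ``degree $n$'' already fails at $n=1$, $f(x)=x$, where the sum equals $z_2-z_1$.) Multiplying the vanishing expansion by $(-1)^{n+1}$ converts each cofactor sign $(-1)^{n+1+i}$ into the required $(-1)^i$ and produces the stated identity.

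The only genuine technicality is sign bookkeeping — verifying that the cofactor sign $(-1)^{n+1+i}$, combined with the induced ordering on the points in $V_i$, reassembles into $(-1)^i P(p_i(z))$ after absorbing the global factor $(-1)^{n+1}$. I expect no obstacle beyond this routine permutation-sign tracking, since both ingredients are standard multilinear-algebra facts.
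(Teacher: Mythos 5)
The paper does not actually prove this statement --- it is introduced with ``We also recall the following form of Lagrange's interpolation theorem'' and no proof environment follows --- so there is no argument of the author's to compare yours against. Your Vandermonde-determinant proof is the standard one and is correct: the minor obtained by deleting the bottom row and the $i$-th column of $M(z)$ is the Vandermonde determinant on the ordered tuple $p_i(z)$, hence equals $P(p_i(z))$, and the determinant vanishes exactly when the bottom row lies in the span of the monomial rows $1, x, \dots, x^{n-1}$. The sign bookkeeping you defer is indeed routine: the cofactor sign $(-1)^{(n+1)+i}$ becomes $(-1)^i$ after multiplying through by the global $(-1)^{n+1}$, and no reordering of the nodes is needed since $p_i$ preserves the induced order.

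You are also right to flag the hypothesis: with $n+1$ nodes and $P$ the Vandermonde polynomial in $n$ variables, the identity holds precisely for $\deg f \le n-1$, and your $n=1$, $f(x)=x$ counterexample shows the literal ``degree $n$'' reading fails. This off-by-one propagates into the paper: Corollary~\ref{corollary:interpol} should read ``degree $\le n-1$'' (or take $z \in k^{\times n+2}$), and in the proof of Theorem~\ref{theorem:countermod} the points $z^i$ should be drawn from $S_i^{\times m+2}$ rather than $S_i^{\times m+1}$ for the vanishing $\nabla^{z^i}_i \tilde f_i = 0$ to follow. None of this affects the qualitative conclusion there (one only needs \emph{some} fixed order of divided difference to annihilate polynomials of degree $\le m$ while detecting non-polynomiality of $H$), but the indexing should be fixed for the argument to parse.
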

\begin{definition}
Let $k$ be a field. For $z \in k^{\times n+1}$, we may define a $k$-linear operator 
\[\nabla^z_i: \text{Hom}(k^{\times m}, k) \to \text{Hom}(k^{\times m-1},k)\]
as follows: For any function $f \in  \text{Hom}(k^{\times m}, k)$, set:
\[\forall s \in k^{\times m-1}: \; \;  \nabla^{z}_i(f)(s) = \sum_{j=1}^{n+1} (-1)^j P(p_j(z)) f(t^{s}_i(\text{pr}_j(z))).\]
For any two $z \in k^{\times n+1}, z' \in k^{\times n'+1}$, one can verify that:
\[\nabla^z_i \nabla^{z'}_j = \nabla^{z'}_j \nabla^{z}_i\]
for $i \neq j$. Furthermore, if $f(x_1,...,x_m) = \prod_i f_i(x_i)$, and $z^i \in \text{Hom}(k^{\times n_i}, k)$ for $i\in \{1,2,...,m\}$, 
\[(\nabla^{z^m}_m \circ \nabla^{z^{m-1}}_{m-1} \circ ... \circ \nabla^{z^1}_{1})(f) = \prod_i \nabla^{z^i} f_i.\]
\end{definition}
\begin{corollary}\label{corollary:interpol}
Let $f: \Omega \to k$ be function defined on a countably infinite subset $\Omega \subset k$. Then: $\nabla^{z} f = 0,\;  \forall z \in k^{\times n+1}$ if and only if $f$ is a polynomial of degree $\le n$.
\end{corollary}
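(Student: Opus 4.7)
The ``if'' direction I would handle immediately via the Lagrange interpolation theorem recalled just above: if $f$ is a polynomial of degree $\le n$, then $\sum_{j=1}^{n+1} (-1)^j P(p_j(z)) f(\text{pr}_j(z)) = 0$ for every $z \in k^{\times n+1}$, and this expression is precisely $\nabla^z f$ (regarding $f$ as an element of $\text{Hom}(k^{\times 1}, k)$).

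For the converse direction, my plan is to construct the relevant polynomial by Lagrange interpolation on a chosen set of $n+1$ points and then use the vanishing hypothesis to match it to $f$ everywhere else on $\Omega$. Concretely, since $\Omega$ is countably infinite, I would pick $n+1$ distinct points $\omega_1, \ldots, \omega_{n+1} \in \Omega$ and let $g \in k[x]$ be the unique polynomial of degree $\le n$ satisfying $g(\omega_i) = f(\omega_i)$ for each $i$. For any other $\omega \in \Omega$ I would form the tuple
\[z := (\omega_1, \ldots, \omega_n, \omega) \in \Omega^{\times n+1}\]
and invoke $\nabla^z f = 0$ by hypothesis, together with $\nabla^z g = 0$ by the ``if'' direction applied to $g$. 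Subtracting the two relations and using $(f-g)(\omega_i) = 0$ for $i = 1, \ldots, n$ collapses the identity to
\[(-1)^{n+1} P(p_{n+1}(z))\bigl(f(\omega) - g(\omega)\bigr) = 0,\]
and since $P(p_{n+1}(z)) = P(\omega_1,\ldots,\omega_n) = \prod_{1 \le i < j \le n}(\omega_j - \omega_i) \ne 0$, this forces $f(\omega) = g(\omega)$.

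The only real point of care is the deliberate placement of the ``variable'' point $\omega$ in the \emph{last} slot of $z$: this ensures that the surviving coefficient $P(p_{n+1}(z))$ depends only on the fixed base points $\omega_1, \ldots, \omega_n$ and is therefore the nonzero Vandermonde-type product; in any other slot, the surviving coefficient would involve $\omega$ itself and could a priori vanish. Beyond this bookkeeping the argument is essentially just the Lagrange identity plus the vanishing hypothesis, so I do not anticipate any substantive obstacle.
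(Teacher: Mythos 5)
Your proof is correct, and since the paper states Corollary~\ref{corollary:interpol} without any proof, your argument (the ``if'' direction is the interpolation theorem verbatim; for the converse, interpolate $f$ at $n+1$ points by a polynomial $g$ of degree $\le n$ and use $\nabla^z(f-g)=0$ with the free point placed in the last slot so that the surviving coefficient is the nonvanishing Vandermonde product $P(\omega_1,\ldots,\omega_n)$) is exactly the standard argument the paper leaves implicit. The one point of care you flag --- that the variable point must occupy the slot whose deletion leaves only the fixed, distinct base points --- is precisely the right one, and your handling of it is correct.
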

Finally, we show:
\begin{proposition}\label{proposition:notpoly}
For any field $k$, there exists an algebraically closed field extension $k \subset E(k)$, and a function $f: E(k) \to E(k)$ such that there is no countably infinite subset $S \subset E(k)$ such that $f$, when restricted to $S$, is a polynomial. If $|k| = \infty,$ then $|E(k)| = |k|$.
\end{proposition}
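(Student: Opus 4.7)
The plan is to construct $E(k)$ as the algebraic closure of a purely transcendental extension of $k$, and to realize $f$ as an injection from $E(k)$ into the transcendence basis which avoids the ``transcendental support'' of each point. The key algebraic fact is that elements of a transcendence basis remain independent from any finite subcollection, which will force $f$ to agree with any given polynomial only finitely often, and this finiteness is equivalent to the non-existence of a countably infinite $S$ on which $f$ is polynomial.

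First, I would set $\lambda = |k|$ when $k$ is infinite and $\lambda = \aleph_0$ otherwise, choose a set of indeterminates $T = \{t_\alpha : \alpha < \lambda\}$, and let $E(k) := \overline{k(T)}$. Standard cardinality estimates give $|E(k)| = \lambda$, so $|E(k)| = |k|$ whenever $|k|$ is infinite. For each $x \in E(k)$, algebraicity over $k(T)$ lets me fix a finite subset $T_x \subset T$ with $x \in \overline{k(T_x)}$.

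Next, I would construct an injection $\phi : E(k) \hookrightarrow T$ with $\phi(x) \notin T_x$ for every $x$, by transfinite recursion along a well-ordering $E(k) = \{x_\alpha : \alpha < \lambda\}$: at stage $\alpha$ the forbidden set $T_{x_\alpha} \cup \{\phi(x_\gamma) : \gamma < \alpha\}$ has cardinality strictly less than $\lambda = |T|$, so a valid choice always exists. I then define $f(x) := \phi(x)$, viewed as an element of $T \subset E(k)$.

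It remains to show that for every polynomial $P \in E(k)[x]$ the set $\{x \in E(k) : f(x) = P(x)\}$ is finite; once this is established, the proposition follows immediately, since any countably infinite $S$ on which $f|_S$ were polynomial would be contained in such a finite set. Given $P$, I pick a finite $T_P \subset T$ containing the transcendental supports of all coefficients of $P$, so that $P(x) \in \overline{k(T_P \cup T_x)}$ for every $x$. If $f(x) = P(x)$ then $\phi(x) \in T \cap \overline{k(T_P \cup T_x)}$, and the main obstacle --- the only point where algebraic independence of $T$ is used essentially --- is the identity $T \cap \overline{k(T')} = T'$ for any finite $T' \subset T$. Combined with $\phi(x) \notin T_x$, this forces $\phi(x) \in T_P$, so by injectivity of $\phi$ the bad set has cardinality at most $|T_P| < \aleph_0$.
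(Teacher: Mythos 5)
Your proof is correct, but it takes a genuinely different route from the paper's. The paper builds $E(k)$ as an increasing union $k=k_1\subset k_2\subset\cdots$ of algebraic closures, where each new stage adjoins a fresh indeterminate $X_s$ for every element $s$ appearing at the previous stage, and sets $f(s)=X_s$; the non-polynomiality is then verified by a two-case analysis (either infinitely many points of $S$ pile up in a single layer $k_t\setminus k_{t-1}$, which is ruled out via the divided-difference operators $\nabla^z$ and Lagrange interpolation, or $S$ escapes every $k_n$, which is ruled out because $f(s)=X_s$ is transcendental over the field containing $p(s)$). You instead fix a single transcendence basis $T$ of the right cardinality once and for all, and build $f$ as an injection $E(k)\hookrightarrow T$ avoiding the finite transcendental support of each point by transfinite recursion. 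The payoff of your version is a cleaner and strictly stronger conclusion: for \emph{every} polynomial $P$ the agreement set $\{x: f(x)=P(x)\}$ is finite, bounded by the size of the support of the coefficients of $P$, with the only algebraic input being $T\cap\overline{k(T')}=T'$ for finite $T'\subset T$; no interpolation machinery and no case split are needed. The paper's tower construction is more explicit as a recursive rule and dovetails stylistically with the $\nabla^z$ operators used later, but the later applications only use the stated property of $f$, so your construction would serve equally well. Your cardinality bookkeeping (the forbidden set at stage $\alpha<\lambda$ has size $<\lambda=|T|$, and $|\overline{k(T)}|=\lambda$) is correct in both the finite and infinite cases of $k$.
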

\begin{proof}
We define a sequence of field extensions $0=k_0\subset k=k_1 \subset k_2 \subset ...$ and functions $f_{n}:k_n \to k_{n+1}$ inductively. For the fields, set 
\[k_{n+1} = \overline{k_{n}(k_{n} \setminus k_{n-1})}\]
That is, $k_{n+1}$ is the algebraic closure of the field of fractions of the free polynomial ring $k_{n}[X_s]$ indexed by elements $s \in k_{n} \setminus k_{n-1}$. For the functions, we take:
\[\forall s \in k_{n} \setminus k_{n-1}: \; \; f_n(s) = X_s \in k_{n+1}\]
\[\forall s \in k_{n-1}: \; \; f_n(s) = f_{n-1}(s) \in k_{n} \subset k_{n+1}.\]
Set $E(k) = \underset{n}{\cup} k_n$, and $f=\colim{n} f_n$, which is a map $E(k) \to E(k)$. \\
\indent We claim that $f$ is not polynomial on any infinite subset $S \subset E(k)$. Suppose that there exists a polynomial $p$ of degree $d$ such that $f|_{S} = p|_{S}$. There are two cases; there exists a $t \in \bb{N}$ such that $|S \cap k_t\setminus k_{t-1}| \ge d+1$, or for any $n \in \bb{N}$, $S \cap (k \setminus k_n) \neq 0$.\\
\indent In the first case, there exists an element $z \in (k_t \setminus k_{t-1})^{d+1}$ such that:
\[\nabla^z f = \sum_i (-1)^iP(p_i(z))f(\text{pr}_i(z))= 0.\]
As $P(p_i(z)) \in k(z_1,...,z_{n+1})$ for every $i$, this is a contradiction by construction.\\
\indent In the second case, take an $N \in \bb{N}$ such that the coefficients of $p$ are contained in $k_N$, and an $s \in S \cap (k \setminus k_N)$. Suppose that $s \in k_n\setminus k_{n-1}$. By supposition, $f(s) = p(s) \in k_n$, which is again a contradiction by construction.
\end{proof}
Later, we will need some results of infinite Ramsey theory, which originated from the paper \cite{ErdosRado}. Before stating the main theorem in a more convenient language, let us introduce some notation. 
\begin{construction}\label{notation}
Define the \textit{beth numbers} $\beth_n$ by the following formula:
\[\beth_0 = \aleph_0, \; \beth_n = 2^{\beth_{n-1}}.\]
\end{construction}
Note that in particular, $\beth_{n} \ge \aleph_{n}$, where equality happens when the generalised continuum hypothesis is assumed. For any set $S$, define $[S]^r=\{K \subset S: \; |K|=r\}$, and for any cardinal $\alpha$, define $\alpha_{+}$ to be its successor.
\begin{theorem}[Theorem 7.3 \cite{Higherinfinite}]\label{theorem:erdosrado}
For any set $S$ of size $\beth_{r+}$, and any set map:
\[f: [S]^{r+1} \to \bb{N}\]
There exists a set $S' \subset S$ with $|S|=\aleph_{1}$ and an integer $n \in \bb{N}$ such that $[S']^{r+1} \subset f^{-1}(n)$.
\end{theorem}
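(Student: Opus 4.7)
The plan is to establish this Erdős--Rado result by induction on $r \geq 0$, reducing a coloring of $(r+1)$-element subsets to one of $r$-element subsets via a canonical end-homogeneous sequence. The base case $r = 0$ is immediate from pigeonhole: a map from a set of cardinality $\aleph_1 = \beth_{0+}$ to the countable set $\bb{N}$ must have an uncountable fiber.

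For the inductive step, fix $r \geq 1$, assume the statement for $r-1$, and set $\kappa = \beth_{(r-1)+}$ and $\lambda = \beth_{r+}$. Given $f: [S]^{r+1} \to \bb{N}$ with $|S| = \lambda$, I would construct by transfinite recursion a sequence of distinct elements $a_\alpha \in S$ and a decreasing family of subsets $T_\alpha \subseteq S$ of cardinality $\lambda$, for $\alpha < \kappa$, satisfying the \emph{end-homogeneity} condition: for every $\alpha < \kappa$, every $r$-subset $\sigma \subseteq \{a_\beta : \beta \leq \alpha\}$, and all $x, y \in T_{\alpha+1}$, one has $f(\sigma \cup \{x\}) = f(\sigma \cup \{y\})$. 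At a successor stage there are at most $(|\alpha|+1)^r \leq \beth_{r-1}$ relevant subsets $\sigma$; each partitions $T_\alpha$ into $\aleph_0$ fibers via $x \mapsto f(\sigma \cup \{x\})$, and the common refinement has at most $\aleph_0^{\beth_{r-1}} = 2^{\beth_{r-1}} = \beth_r$ blocks. Since $\lambda = \beth_r^+$ is regular and $\beth_r < \lambda$, some block has cardinality $\lambda$; I take it as $T_{\alpha+1}$ and choose $a_{\alpha+1} \in T_{\alpha+1}$.

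Once the sequence $\{a_\alpha\}_{\alpha < \kappa}$ is built, end-homogeneity forces $f(\{a_{\alpha_1}, \ldots, a_{\alpha_{r+1}}\})$, for $\alpha_1 < \ldots < \alpha_{r+1} < \kappa$, to depend only on $(\alpha_1, \ldots, \alpha_r)$, since $a_{\alpha_{r+1}} \in T_{\alpha_{r+1}} \subseteq T_{\alpha_r + 1}$. The auxiliary coloring $F: [\kappa]^r \to \bb{N}$ defined by $F(\{\alpha_1 < \ldots < \alpha_r\}) := f(\{a_{\alpha_1}, \ldots, a_{\alpha_r}, a_\gamma\})$ for any $\gamma > \alpha_r$ is therefore well-defined, and applying the inductive hypothesis to $F$ on the ambient set $\kappa = \beth_{(r-1)+}$ yields $A \subseteq \kappa$ with $|A| = \aleph_1$ and $n \in \bb{N}$ such that $F([A]^r) = \{n\}$. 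Then $S' := \{a_\alpha : \alpha \in A\}$ is the desired $\aleph_1$-sized monochromatic subset for $f$.

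The main obstacle is preserving $|T_\alpha| = \lambda$ at limit stages $\alpha < \kappa$, since a decreasing intersection of $\lambda$-sized sets can in principle collapse. The standard remedy is either to refine the successor step so that the complements $T_\alpha \setminus T_{\alpha+1}$ lie in a normal filter on $\lambda$ and invoke regularity at limits, or equivalently to recast the construction inside an elementary submodel $M \prec H(\theta)$ of size $\kappa$ containing $f$ and $S$, indexing the $a_\alpha$'s by a cofinal sequence in the ordinal $M \cap \lambda$ (which has cofinality $\kappa$). Either way, the bookkeeping exploits that $\lambda = \beth_r^+$ is a regular successor cardinal strictly greater than $\beth_r \geq \kappa$; this is the delicate step I would want to reconcile carefully against Kanamori's treatment.
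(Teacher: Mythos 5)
The paper does not prove this statement; it quotes it from Kanamori (Theorem 7.3 of \cite{Higherinfinite}), so I am comparing your argument against the standard proof there. Your skeleton is the standard one: induction on $r$, an end-homogeneous sequence of length $\kappa=\beth_{(r-1)+}$, and the induced coloring $F$ on $[\kappa]^r$ to which the inductive hypothesis is applied. The base case, the successor-stage cardinality count (at most $\beth_{r-1}$ sets $\sigma$, common refinement into at most $\aleph_0^{\beth_{r-1}}=2^{\beth_{r-1}}=\beth_r$ blocks, regularity of $\lambda=\beth_{r+}$), and the final extraction of $S'$ are all correct.

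The genuine gap is exactly the one you flag, and neither of your proposed remedies closes it as stated. A decreasing chain of $\lambda$-sized sets can have empty intersection after only $\omega$ steps (partition $S$ into $\omega$ pieces of size $\lambda$ and discard one per step), and nothing in your successor step prevents $|T_\alpha\setminus T_{\alpha+1}|=\lambda$: when two or more blocks of the common refinement have size $\lambda$, you necessarily discard a $\lambda$-sized set. For the same reason the discarded sets need not lie in the dual ideal of any $<\lambda$-complete (let alone normal) filter, so the first remedy fails. The elementary-submodel remedy is the right idea, but the standard implementation dispenses with the decreasing chain altogether rather than repairing it at limits: fix $M\prec H(\theta)$ with $f,S\in M$, $|M|=\beth_r$, and $M$ closed under $\beth_{r-1}$-sequences (possible since $\beth_r^{\beth_{r-1}}=\beth_r$), and fix a single point $a\in S\setminus M$. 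Recursively choose $a_\xi\in S\cap M$ for $\xi<\kappa$ realizing the same $f$-type as $a$ over $\{a_\eta:\eta<\xi\}$, i.e.\ $f(\sigma\cup\{a_\xi\})=f(\sigma\cup\{a\})$ for every $r$-subset $\sigma$ of the previously chosen points; this type is an element of $M$ by the closure hypothesis, and $a$ witnesses in $H(\theta)$ that it is realized outside $\{a_\eta:\eta<\xi\}$, so elementarity yields a realization inside $M$. End-homogeneity is then automatic, since $f(\sigma\cup\{a_{\xi_1}\})=f(\sigma\cup\{a\})=f(\sigma\cup\{a_{\xi_2}\})$, there is no limit-stage obstruction, and your reduction to $F(\{\alpha_1<\cdots<\alpha_r\}):=f(\{a_{\alpha_1},\dots,a_{\alpha_r},a\})$ and the inductive hypothesis proceeds verbatim. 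Until a fix of this kind is actually carried out, the recursion of length $\kappa$ you describe does not exist.
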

\begin{remark}\label{remark:boxuseful}
Theorem~\ref{theorem:erdosrado} also implies the following weaker statement: For a set $S$ of cardinality $\beth_{r+}$, and a set theoretic map
\[f: S^{\times r+1} \to \bb{N}\]
such that $f(s) = f(\sigma(s))$ for every $s \in S^{\times r+1}$ and $\sigma \in \mathrm{S}_{r+1}$, the symmetric group on $r+1$-elements, there exists a $n \in \bb{N}$ and countably infinite subsets $S_1, ..., S_{r+1}$ of $S$ such that $ \prod_{i=1}^{r+1} S_i \subset f^{-1}(n)$.
\end{remark}
\subsection{Cup-products and descendability}\label{indiv} Let $k$ be a commutative ring, $R$ be a $k$-algebra. For $n \in \bb{N} \cup \{\infty\}$, let $R_n$ be the ring $R^{\otimes_k n}$ and
\[t_i: R \to R_n\]
to be the inclusion by $1$ into the $i^{\text{th}}$ factor. It will be convenient to extend the definition of $t_i$ to $R\text{-mod}$ by setting $t_i(M)=M \otimes_{R, t_i} R_n$. \\
\indent Let $\{x_s\}_{s \in S}$ be a set of elements of $R$ indexed by a set $S$. Consider the map
\[f_{R,S}: \bigoplus_{s \in S} R \to R \oplus \bigoplus_{s \in S} R\]
determined by $f_{R,S}(1_{s})_{s'} = x_s\delta_{s'=s}$ and $f_{R,S}(1_s)_{0} = 1 \in R$ for all $s\in S$, where the factor of the RHS that is not indexed by $S$ we view as the $0^{\text{th}}$-component. We note that:
\begin{lemma}\label{lemma:annihilators}
The map $f_{R,S}$ is injective as soon as $\text{Ann}_{R}(x_{s}) \cap \text{Ann}_{R}(x_{s'})$ for any distinct $s, s' \in S$.
\end{lemma}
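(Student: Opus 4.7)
The plan is to describe the kernel of $f_{R,S}$ explicitly and then use the pairwise annihilator hypothesis together with a product-and-reduce argument to show it vanishes. By the finite-support convention, any element of the source can be written $v = \sum_{s \in T} a_s 1_s$ for some finite $T = \{s_1, \ldots, s_n\} \subset S$ and $a_s \in R$; reading off $f_{R,S}(v)$ componentwise shows that $v \in \ker f_{R,S}$ precisely when $\sum_{i=1}^n a_i = 0$ together with $a_i x_i = 0$ for every $i$ (here $a_i = a_{s_i}$ and $x_i = x_{s_i}$). By symmetry between the indices it suffices to prove $a_1 = 0$.

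The starting observation will be to exploit the relation $a_1 = -(a_2 + \cdots + a_n)$ by multiplying both sides by $x_2 x_3 \cdots x_n$: each summand on the right contains a factor $x_k$ with $a_k x_k = 0$, so by commutativity the right-hand side collapses term by term, yielding $a_1 \prod_{j=2}^n x_j = 0$.

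The substantive step is then to peel off the $x_j$'s one at a time using the pairwise hypothesis. Concretely, for any nonempty $J \subseteq \{2, \ldots, n\}$ with $a_1 \prod_{j \in J} x_j = 0$ and any $j_0 \in J$, the element $b := a_1 \prod_{j \in J \setminus \{j_0\}} x_j$ is annihilated by $x_{j_0}$ (by the hypothesis on $J$) and by $x_1$ (by commutativity and $a_1 x_1 = 0$), hence lies in $\text{Ann}_R(x_1) \cap \text{Ann}_R(x_{j_0})$, which is zero by hypothesis. A downward induction on $|J|$ starting from $J = \{2, \ldots, n\}$ then terminates with $a_1 = 0$. The one conceptual subtlety — and what I expect to be the main obstacle for anyone attempting a direct proof — is that pairwise disjointness of the annihilators does not by itself imply independence of the family $\{\text{Ann}_R(x_i)\}$ for $n \geq 3$, so it is precisely this iterated product manipulation, rather than a one-shot pairwise argument, that turns the hypothesis into the full injectivity statement.
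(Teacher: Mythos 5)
Your argument is correct and complete. Note first that the paper states this lemma without any proof at all (it is followed immediately by the definition of indivisible sequences), so there is no argument of the author's to compare against; your write-up in effect supplies the missing proof. Your reading of the kernel is right: for $v=\sum_{s\in T}a_s 1_s$ the $0$-component of $f_{R,S}(v)$ is $\sum_s a_s$ and the $s$-component is $a_s x_s$, so $v\in\ker f_{R,S}$ iff $\sum_s a_s=0$ and $a_sx_s=0$ for all $s$. The product-and-peel step is sound: from $a_1\prod_{j\in J}x_j=0$ and $a_1x_1=0$, the element $a_1\prod_{j\in J\setminus\{j_0\}}x_j$ lies in $\mathrm{Ann}_R(x_1)\cap\mathrm{Ann}_R(x_{j_0})=0$, and the downward induction terminates at $J=\emptyset$ with $a_1=0$; the base cases $n=1,2$ also check out. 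Your closing remark is well taken --- what is really being proved is that $\mathrm{Ann}_R(x_1)\cap\sum_{i\ge 2}\mathrm{Ann}_R(x_i)=0$, which does \emph{not} follow formally from pairwise trivial intersections of submodules and genuinely needs the multiplicative structure, exactly as your iterated product exploits. One small point worth flagging: the lemma as printed is truncated (it reads ``as soon as $\mathrm{Ann}_R(x_s)\cap\mathrm{Ann}_R(x_{s'})$'' with no ``$=0$''); you silently repaired this to the intended hypothesis, which is also the one actually furnished by condition (i) of Proposition~\ref{proposition:indivequiv}, since $(x_s,x_{s'})=R$ forces the two annihilators to meet trivially.
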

\begin{definition}
Let $k$ be a commutative ring, $R$ be a $k$-algebra and $S$ a set. A sequence of elements $\{x_s\}_{s \in S}$ of $R$ is said to be \textit{indivisible relative to k} if:
\begin{enumerate}
\item[(i)] $f_{R,S}$ is $R$-universally injective. Equivalently, $f_{R,S}$ is injective and $\coker{f_{R,S}}$ is $R$-flat.
\item[(ii)] For any natural number $n \ge 1$, and a finite collection $(s_1,s_2,...,s_n) \in S^{\times n}$, the ideal $\sum_{i=1}^n t_i(x_{s_i})$ in $R^{\otimes_{k} n}$ is not the unit ideal.
\end{enumerate}
When $k=\bb{Z}$, we simply say that $\{x_s\}_{s \in S}$ is \textit{indivisible}.
\end{definition}
\begin{proposition}\label{proposition:indivequiv}
Let $R$ be a commutative ring with a sequence of elements $\{x_s\}_{s \in S}$ such that:
\begin{enumerate}
\item[(i)] For any pair of $s, s' \in S$, the ideal $(x_s, x_s') \subset R$ is the unit ideal.
\item[(ii)] For every $s \in S$, the composed map $k \to R \to R/x_s$ is faithfully flat.
\end{enumerate}
Then $\{x_s\}_{s \in S}$ is an indivisible sequence in $R$ relative to $k$.
\end{proposition}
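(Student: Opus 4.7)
The plan is to verify the two conditions of indivisibility separately.

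For the universal injectivity in (i), I would split into injectivity and flatness of the cokernel. Injectivity of $f_{R,S}$ follows from Lemma~\ref{lemma:annihilators}: writing $1 = ax_s + bx_{s'}$ via assumption (i), any $r$ annihilating both $x_s$ and $x_{s'}$ satisfies $r = r(ax_s + bx_{s'}) = 0$, so the annihilator intersection vanishes. For flatness of $C := \coker{f_{R,S}}$, I would check stalk-locally. The cokernel is presented with generators $e_0, \{e_s\}_{s \in S}$ and relations $e_0 + x_s e_s = 0$ for $s \in S$. At any prime $\mathfrak{p}$ of $R$, hypothesis (i) prevents two of the $x_s$ from simultaneously lying in $\mathfrak{p}$; the remaining $x_s$ are units in $R_\mathfrak{p}$ and allow us to eliminate the corresponding $e_s$ via $e_s = -x_s^{-1} e_0$. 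A direct verification shows that $C_\mathfrak{p}$ is free of rank $1$: generated by $e_0$ when no $x_s$ lies in $\mathfrak{p}$, and by $e_{s_0}$ (with $e_0 = -x_{s_0} e_{s_0}$) when $x_{s_0}$ is the unique such generator in $\mathfrak{p}$. Stalkwise freeness yields flatness of $C$.

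For condition (ii), I would observe the ring isomorphism
\[ R^{\otimes_k n} \Big/ \sum_{i=1}^n t_i(x_{s_i}) \;\cong\; (R/x_{s_1}) \otimes_k (R/x_{s_2}) \otimes_k \cdots \otimes_k (R/x_{s_n}), \]
obtained by iteratively modding out each tensor factor. By hypothesis (ii), every factor $R/x_{s_i}$ is faithfully flat over $k$, and tensor products of faithfully flat $k$-modules are again faithfully flat, so the right-hand side is nonzero. Therefore $\sum_i t_i(x_{s_i})$ is a proper ideal of $R^{\otimes_k n}$.

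The main technical point is the flatness argument: the pairwise coprimality in (i) is precisely what forces the local structure of $C$ to collapse to a rank-one free module at every prime, which would otherwise carry residual torsion arising from the compatibility relations at the generator $e_0$. Condition (ii) is then an immediate corollary of the preservation of faithful flatness under tensor products over $k$, so the only genuinely delicate step is the stalk-local freeness computation described above.
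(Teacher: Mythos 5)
Your proposal is correct, but it reaches the two conditions by routes that differ from the paper's in instructive ways. For universal injectivity, the paper does not touch the cokernel at all: it notes that $f_{R,S}\otimes_R R' = f_{R',S}$ for any ring map $R\to R'$, that hypothesis (i) is visibly stable under base change, and hence that every base change of $f_{R,S}$ is injective by Lemma~\ref{lemma:annihilators} (your explicit computation $r = r(ax_s+bx_{s'})=0$ is exactly the missing content of that lemma); this gives universal injectivity directly, in one line. You instead verify the equivalent formulation ``injective with flat cokernel'' by a stalk-local computation, using (i) to see that at most one $x_s$ lies in any prime $\mathfrak{p}$ and then exhibiting $C_\mathfrak{p}$ as free of rank one on $e_0$ or on $e_{s_0}$; I checked the two cases and the kernel of your evaluation map is indeed exactly the image of $f_{R_\mathfrak{p},S}$, so the argument is sound. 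Your route is longer but yields strictly more information (the cokernel is locally free of rank one, not merely flat), whereas the paper's is shorter and makes the role of universal injectivity transparent. For condition (ii), you identify $R^{\otimes_k n}/\sum_i t_i(x_{s_i})$ with $\bigotimes_{i,k} R/x_{s_i}$ by iterated right-exactness of the tensor product, where the paper invokes the Kunneth spectral sequence for the complexes $R\xrightarrow{x_s}R$ to get the same identification; your version is the more elementary of the two and entirely sufficient, since only the degree-zero term is needed. The concluding step --- a tensor product of faithfully flat $k$-algebras is faithfully flat, hence nonzero --- matches the paper. No gaps.
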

\begin{proof}
We first observe that property (i) assures that $f_{R,S}$ is injective due to Lemma~\ref{lemma:annihilators}. Since $f_{R,S} \otimes_{R} R' = f_{R',S}$ for any ring map $f: R \to R'$, it suffices to observe that $\{f(x_s)\}_{s \in S}$ still satisfy (i).\\
\indent Using the Kunneth spectral sequence applied to the complexes $R \over{x_s}{\to} R$, we can show that:
\[R^{\otimes_{k} n}/(t_1(x_{s_1}),...,t_n(x_{s_n})) = \bigotimes_{i=1, k}^n R/x_{s_i}.\]
Under the faithfully flat assumption of (ii), the latter ring is non-zero.\\
\end{proof}
\begin{example}\label{example:indiv}
\begin{enumerate}
\item[(i)] Let $k$ be a field, $R=k[x]$, and for a subset $S \subset k$, sequence $x_s=x-s$ is indivisible relative to $k$.
\item[(ii)] For a set $S$, let $R=\text{Hom}_{\text{Set}}(S, \bb{F}_2)$. The sequence $x_s = 1-\delta_{t=s}$ of idempotents of $R$ is indivisible.
\end{enumerate}
\end{example}
\begin{construction}\label{construction:classes}
Let $R$ be a $k$-algebra with an $S$-indexed sequence that is indivisible relative to $k$, and let $f_{R,S}$ be the associated map. Let $\eta \in \text{Ext}^1_{R}(\coker{f_{R,S}}, \oplus_{s \in S} R)$ be the extension class corresponding to the exact sequence defined by $f_{R,S}$.  For each $i \in 1, ..., n$, let $\eta_i = \eta \otimes_{R, t_i} R_n$.
\end{construction}
\begin{remark}
Throughout this section we will be tensoring various extension classes. We remark that for a ring $A$ together with $A$-modules $M,N,M',N'$, there is an $A$-bilinear pairing:
\[\text{RHom}_A(M,N) \times \text{RHom}_A(M',N') \to \text{RHom}_A(M \otimesl_A M', N \otimesl_A N').\]
If $N=N'=A$, then we will compose with the multiplication isomorphism $A \otimesl_A A \to A$ so that the target is $\text{RHom}_A(M \otimesl_A M', A)$. Moreover, as is the case in this section, suitable flatness conditions remove the necessity for derived tensor products on the right.
\end{remark}
The following theorem is the main technical input for our construction of faithfully flat ring maps that are not descendable.
\begin{theorem}\label{theorem:countermod}
For any $n \in \bb{N} \cup \{\infty\}$, let $F$ be a field with $|F|= \beth_{(n-1)+}$, and $k=E(F)$ (see \ref{proposition:notpoly}). Let $R=k[x]$ and $\{x-s\}_{s \in k}$ be the $k$-indexed indivisible sequence relative to $k$ (see \ref{example:indiv}). Let $\eta_i$ be the classes of Construction~\ref{construction:classes}. Then there exists an $R$-linear map $h: \oplus_{k} R \to R$ such that:
\[\bigotimes_{i=1, R_n}^m (h \otimes_{R, t_i} R_n) \circ \eta_i \neq 0,\]
for any $m < n+1$.
\end{theorem}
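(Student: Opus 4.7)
The plan is to build $h$ from the function $f: k \to k$ of Proposition~\ref{proposition:notpoly} (applied to $F$), and then derive a contradiction from the hypothetical vanishing of the cup product via a combination of finite-difference calculus and infinite Ramsey theory. Concretely, set $h(1_s) := f(s) \in k \subset R$, and suppose for contradiction that $\bigotimes_{i=1}^m (h \otimes t_i)(\eta_i) = 0$ in $\text{Ext}^m_{R_n}(M_1 \otimes_{R_n} \ldots \otimes_{R_n} M_m, R_n)$ for some $1 \le m \le n$. The two-term free resolutions $P^\bullet_i = [\oplus_k R_n \xrightarrow{g_i} R_n \oplus \oplus_k R_n]$ of $M_i$, where $g_i(1_s) = (1, (x_i - s)1_s)$, tensor to a free resolution of $\bigotimes_i M_i$ (using that $M$ is $R$-flat by indivisibility), and the cup product class is represented by the cocycle $\oplus_{k^m} R_n \to R_n$, $1_{s_1} \otimes \ldots \otimes 1_{s_m} \mapsto \prod_i f(s_i)$ sitting in the degree $-m$ part. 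Writing a hypothetical cobounding cochain on the degree $-(m-1)$ part of the total complex and evaluating at $x_i = s_i$ for $1 \le i \le m$ (which kills all $(x_j - s_j)$-summands of the boundary), then projecting to the constant part in $x_{m+1}, \ldots, x_n$, one obtains the identity
\[\prod_{i=1}^m f(s_i) = \sum_{j=1}^m (-1)^{j-1} G_j(s_1, \ldots, s_m) \quad \text{in } k,\]
valid for all $(s_1, \ldots, s_m) \in k^m$, in which each $G_j(s_1, \ldots, s_m)$ is polynomial in $s_j$ of some degree $d_j((s_i)_{i \neq j}) \in \bb{N}$ that depends on the remaining variables.

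If $m = 1$, this immediately forces $f$ to be polynomial of bounded degree on all of $k$, already contradicting Proposition~\ref{proposition:notpoly}. If $m \ge 2$, Ramsey theory controls the degrees uniformly: let $D: [k]^{m-1} \to \bb{N}$ be the symmetric function $D(S) = \max_{j, \sigma} d_j(\sigma(S))$, maximum over orderings $\sigma$ of $S$ and indices $j$. Since $|k| = |E(F)| = |F| = \beth_{(n-1)+} \ge \beth_{(m-2)+}$, Theorem~\ref{theorem:erdosrado} yields a countably infinite $T \subset k$ and an $N \in \bb{N}$ with $D \equiv N$ on $[T]^{m-1}$. Consequently, $G_j$ is polynomial of degree $\le N$ in $s_j$ whenever the remaining $(s_i)_{i \neq j}$ are distinct elements of $T$.

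Partition $T$ into pairwise disjoint countably infinite subsets $T^{(1)}, \ldots, T^{(m)}$, and for arbitrary $(N+1)$-tuples $z_i \in (T^{(i)})^{N+1}$ apply $\nabla^{z_m}_m \circ \ldots \circ \nabla^{z_1}_1$ to both sides of the identity. By the product formula, the left side becomes $\prod_{i=1}^m \nabla^{z_i} f \in k$. For the right side, commute the operators so that $\nabla^{z_j}_j$ acts innermost on $G_j$; the remaining operators evaluate the other arguments at entries of $z_i \in T^{(i)}$, so all inputs $(s_i)_{i \neq j}$ to $G_j$ are distinct elements of $T$, where $G_j$ is polynomial in $s_j$ of degree $\le N$, and $\nabla^{z_j}_j$ then kills it. Hence $\prod_{i=1}^m \nabla^{z_i} f = 0$ for every such choice. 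If each $j$ admitted some $z_j^* \in (T^{(j)})^{N+1}$ with $\nabla^{z_j^*} f \neq 0$, the product would be nonzero; so for some $j$, $\nabla^z f = 0$ for every $z \in (T^{(j)})^{N+1}$, and Corollary~\ref{corollary:interpol} then forces $f|_{T^{(j)}}$ to be polynomial of degree $\le N$, contradicting the construction of $f$.

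The main technical obstacle lies in the first step: carefully computing the differentials of the tensor-product resolution and extracting the polynomial identity via the diagonal evaluation $x_i = s_i$. Once this identity is in hand, the Ramsey argument and the finite-difference calculus proceed relatively smoothly.
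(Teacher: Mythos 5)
Your proposal is correct and follows essentially the same route as the paper: take $h$ induced by the non-polynomial function of Proposition~\ref{proposition:notpoly}, reduce vanishing of the cup product to the existence of a cobounding cochain on the tensored two-term resolutions, evaluate on the diagonal $x_i=s_i$ to kill the $(x_j-s_j)$-terms, uniformize the degrees of the resulting coefficient polynomials via Erd\H{o}s--Rado, and apply the Lagrange difference operators $\nabla^{z_i}_i$ to force $f$ to be polynomial on a countably infinite set. The only (harmless) deviations are that you handle $m<n$ by projecting to the constant part in the extra variables rather than base-changing along $\mu_{n,m}$, and you color $(m-1)$-subsets rather than the full $n$-tuples, which if anything slightly cleans up the symmetry hypothesis needed for the Ramsey step.
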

In proving this result, one must show certain $\text{Ext}^n$ classes are non-zero, which is in general difficult. The following lemma provides a simplified criterion.
\begin{lemma}\label{lemma:extcrit}
Let $A$ be a ring, $P_1, P_2, P_1', P_2', ..., P_r'$ be projective $A$-modules, together with two flat $A$-modules $M$ and $M'$ that form two exact sequences:
\[0 \to P_1 \stackrel{d_1}{\to} P_2 \to M \to 0,\]
\[0 \to P_1' \stackrel{d_2}{\to} ... \to P_r' \to M' \to 0.\]
We may view these as extensions $\eta_1 \in \text{Ext}^1_A(M, P_1)$ and $\eta_2 \in \text{Ext}^{r-1}_A(M',P_1')$, and as projective resolutions $P^{\bullet}_1, P^{\bullet}_2$. Then the projective resolution:
\[P^{\bullet}_1 \otimes_A P^{\bullet}_2 \to M \otimes_A M',\]
gives rise to an extension $\text{Ext}^{r}_A(M \otimes_A M', P_1 \otimes_A P_1')$ which is equal to $\eta_1 \otimes_A \eta_2$. Moreover, showing this extension is non-zero is equivalent to verifying that 
\[P_1 \otimes_A P_1' \stackrel{d_1 \otimes 1_{P'_1} \oplus 1_{P_1} \otimes d_2}{\to} P_2 \otimes_A P_1' \oplus P_1 \otimes_A P_2\]
doesn't admit an $A$-linear left inverse.
\end{lemma}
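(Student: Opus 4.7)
The plan is to deduce both claims from the compatibility of the cup product on $\text{Ext}$ with tensor products of projective resolutions, followed by the usual criterion for vanishing of an Ext class represented on such a resolution.

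First I would set up the resolution of $M \otimes_A M'$. Write $C^{\bullet} = [P_1 \to P_2]$ and $D^{\bullet} = [P_1' \to \cdots \to P_r']$ in non-positive cohomological degrees augmenting to $M$ and $M'$ respectively. Each $P_i$ and $P_j'$ is projective, hence flat, so tensor products of them are projective, and the totalization $Q^{\bullet}$ of $C^{\bullet} \otimes_A D^{\bullet}$ is a complex of projective $A$-modules. Flatness of $M$ and $M'$ ensures via K\"unneth (or the collapsing of the hyperTor spectral sequence) that the augmentation $Q^{\bullet} \to M \otimes_A M'$ is a quasi-isomorphism, hence a projective resolution. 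Its term in cohomological degree $-r$ is $P_1 \otimes_A P_1'$, its term in degree $-r+1$ is $P_2 \otimes_A P_1' \oplus P_1 \otimes_A P_2'$, and the differential between these two is $d_1 \otimes 1_{P_1'} \oplus 1_{P_1} \otimes d_2$ up to a Koszul sign on one summand.

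Next I would identify the class $\eta_1 \otimes_A \eta_2$ on this resolution. The extension $\eta_1 \in \text{Ext}^1_A(M, P_1)$ is represented on $C^{\bullet}$ by the identity $P_1 \to P_1$ in cohomological degree $-1$, and similarly $\eta_2$ by the identity $P_1' \to P_1'$ in degree $-(r-1)$ on $D^{\bullet}$. The standard compatibility of the tensor pairing on $\text{Ext}$ with tensor products of resolutions then gives that $\eta_1 \otimes_A \eta_2 \in \text{Ext}^r_A(M \otimes_A M', P_1 \otimes_A P_1')$ is represented on $Q^{\bullet}$ by the tensor of these representatives, namely the identity map $P_1 \otimes_A P_1' \to P_1 \otimes_A P_1'$.

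Finally I would invoke the usual vanishing criterion: a class $[\phi] \in \text{Ext}^r_A(N', N)$ represented by $\phi: Q^{-r} \to N$ on a projective resolution $Q^{\bullet} \to N'$ is zero if and only if $\phi$ factors through the differential $d: Q^{-r} \to Q^{-r+1}$. Applied with $\phi$ the identity and $d$ as above, non-vanishing of $\eta_1 \otimes_A \eta_2$ is exactly the nonexistence of an $A$-linear left inverse of $d_1 \otimes 1_{P_1'} \oplus 1_{P_1} \otimes d_2$, which is the claim. The main potential friction is pinning down the compatibility of the cup product with tensor products of resolutions together with the correct Koszul signs; everything else is routine homological algebra.
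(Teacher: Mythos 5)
Your proposal is correct and follows essentially the same route as the paper: represent $\eta_1$ and $\eta_2$ by chain maps that are the identity in the bottom degree of the respective resolutions, tensor them to get the identity on $P_1 \otimes_A P_1'$ over the tensor-product resolution, and then apply the standard criterion that a class represented on the top term of a projective resolution vanishes iff it factors through the differential. Your version is in fact slightly more careful than the paper's, since you explicitly note that flatness of $M$ and $M'$ is what makes the tensor product of the resolutions again a resolution.
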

\begin{proof}
For the first claim, the resolutions $P_1^{\bullet}, P_2^{\bullet}$ are cofibrant replacements for $M$ and $M'$, and the maps $P_1^{\bullet}[-(r-1)] \to P_1$ and $P_2^{\bullet}[-1] \to P_1'$ correspond to chain maps that are identities on degree 0. The tensor product of this map is then just the identity on $P_1 \otimes_A P_1'$ in $P_1^{\bullet} \otimes_A P_2^{\bullet}[-r]$, which corresponds to the extension class of $M\otimes_A M'$ by $P_1 \otimes_A P_1'$ given by $P_1^{\bullet} \otimes_A P_2^{\bullet}$.\\
\indent The final statement is just a general result of homological algebra.
\end{proof}
\begin{proof}[Proof of Theorem~\ref{theorem:countermod}]
We first reduce to the situation that $m=n$. For $n \in \bb{N}$, there are ring maps $\mu_{n,m}: R_n \to R_m$ that take an element $r_1 \otimes r_2 \otimes... \otimes r_n \in R_n$ to $ r_1 \otimes r_2 \otimes... \otimes (r_mr_{m+1}...r_{n}) \in R_m$. By base-change along $\mu_{n,m}$, or $\colim{j\ge m}{\mu_{j,m}}: R_{\infty} \to R_m$, we may reduce to the case $n=m$.\\
\indent Note the equality:
\[\bigotimes_{i=1, R_n}^n ((h \otimes_{R, t_i} R_n) \circ \eta_i) = \bigotimes_{i=1, R_n}^n (h \otimes_{R, t_i} R_n) \circ \bigotimes_{i=1, R_n}^n \eta_i.\]
By Lemma~\ref{lemma:extcrit}, showing the right-hand side is non-zero is equivalent to showing that there is no $R_n$-linear extension $r$ fitting into the diagram:
\[\begin{tikzcd} R_n & \\
\bigoplus_{k^{\times n}} R_n \arrow[r, "\oplus_i t_i(f_{R,S})"] \arrow[u, "h^{\otimes_k n}"] & \bigoplus_{i} \bigoplus_{k^{\times n-1}} R_n \oplus \bigoplus_{k^{\times n}} R_n \arrow[ul, swap, "r", dashed]\end{tikzcd}\]
Let $H: k \to k$ be a function that is not polynomial on any countably infinite subset of $k$ (see \ref{proposition:notpoly}). Set:
\[h(1_{s}) = H(s) \in k[x],\]
for every $s \in k$. Further, for every $s' \in k^{\times n-1}$ and $s \in k^{\times n}$, let
\[r(1_{i,s'}) = f_i(s'), r(1_{i,s}) = g_i(s).\]
Under the identification $R_n = k[x_1,...,x_n]$, $f_i(s'), g_i(s)$ are polynomials subject to the equation:
\[\sum^n_{i=1} f_i(p_i(t))(x_1,...,x_n) + \sum^n_{i=1} (x_i - \text{pr}_i(t)) g_i(t)(x_1,...,x_n) = \prod^n_{i=1} h(1_{\text{pr}_i(t)})(x_i).\]
for every $t \in k^{\times n}$. In particular, we have:
\[\sum^n_{i=1} f_i(p_i(t))(\text{pr}_1(t),...,\text{pr}_n(t)) = \prod^n_{i=1} h(1_{\text{pr}_i(t)})(\text{pr}_i(t)). \; \; (*)\]
\begin{claim}
There exists a countably infinite subsets $S_1, ..., S_n$ of $k$, such that for every $\omega \in \Omega =\prod^n_i S_i$, the polynomials $f_i(p_i(\omega))$ have (total) degree $\le m$ for some $m \in \bb{N}$.
\end{claim}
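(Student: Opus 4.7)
The plan is to convert the claim into a direct instance of the symmetric form of Erd\H{o}s--Rado (Remark~\ref{remark:boxuseful}) by packaging all the degrees $\deg f_i(p_i(\omega))$ into a single symmetric $\bb{N}$-valued function on $k^{\times n}$.

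Concretely, I would define $\Phi \colon k^{\times n} \to \bb{N}$ by
\[\Phi(\omega) = \max_{1 \le i \le n,\; \sigma \in \mathrm{S}_n} \deg f_i\bigl(p_i(\sigma \omega)\bigr),\]
where $\deg$ denotes total degree in $R_n = k[x_1,\ldots,x_n]$. The maximum is finite because each $f_i(p_i(\sigma\omega))$ is an actual polynomial and the index set is finite; and for any $\tau \in \mathrm{S}_n$, reindexing $\sigma \mapsto \sigma\tau^{-1}$ in the defining maximum shows $\Phi(\tau\omega) = \Phi(\omega)$, so $\Phi$ is symmetric under the action of $\mathrm{S}_n$ on $k^{\times n}$.

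I would then feed $\Phi$ into Remark~\ref{remark:boxuseful} with $r+1 = n$. Since $|k| = |E(F)| = |F| = \beth_{(n-1)+} = \beth_{r+}$ by construction, the Remark produces countably infinite subsets $S_1,\ldots,S_n \subset k$ and an integer $m \in \bb{N}$ with $\prod_{i=1}^n S_i \subset \Phi^{-1}(m)$. Taking $\sigma = \mathrm{id}$ in the definition of $\Phi$ then gives $\deg f_i(p_i(\omega)) \le \Phi(\omega) = m$ for every $i$ and every $\omega \in \Omega = \prod_i S_i$, which is precisely the claim.

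The only conceptual step is the symmetrization over $\mathrm{S}_n$: the naive candidate $\omega \mapsto \max_i \deg f_i(p_i(\omega))$ fails to be symmetric because the projections $p_i$ depend on the ordering of the coordinates of $\omega$, so one cannot directly invoke Erd\H{o}s--Rado. Once this is remedied by taking the max over $\mathrm{S}_n$, the cardinality bookkeeping matches the hypothesis $|F| = \beth_{(n-1)+}$ exactly, which is precisely the source of the sharp bound in Theorem~\ref{theorem:countermod}, and no further obstacle arises.
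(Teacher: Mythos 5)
Your proof is correct and follows the same basic strategy as the paper: define an $\bb{N}$-valued ``degree colouring'' on $k^{\times n}$ and invoke Remark~\ref{remark:boxuseful}. The one difference is that the paper applies the remark directly to $c(t) = \max_i \deg\bigl(f_i(p_i(t))\bigr)$, which --- as you correctly observe --- is not $\mathrm{S}_n$-invariant: $p_i(\sigma t)$ is a reordering of $p_{\sigma^{-1}(i)}(t)$, and the $f_i$ carry no compatibility under such reorderings, so the symmetry hypothesis of the remark is not literally satisfied by $c$. Your symmetrized $\Phi(\omega) = \max_{i,\sigma}\deg f_i(p_i(\sigma\omega))$ repairs this at no cost, since the conclusion $\Phi \equiv m$ on $\prod_i S_i$ still dominates every $\deg f_i(p_i(\omega))$, and the cardinality bookkeeping ($r+1=n$, $|k| = |E(F)| = |F| = \beth_{(n-1)+}$) is exactly as required. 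So your argument is not a different route but a tightened version of the paper's: it fills a small but genuine gap in the paper's own proof precisely where you inserted the symmetrization.
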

\begin{proof}
Consider the function:
\[c: k^{\times n} \to \bb{N}\]
which takes a point $t \in k^{\times n}$ to $\underset{i\in\{1,2,...,n\}}{\text{max}}(\text{deg}(f_i(p_i(t)))$. By Remark~\ref{remark:boxuseful}, there exists a $m \in \bb{N}$ and countably infinite subsets $S_1, ..., S_n$ such that $ \prod_{i=1}^{n} S_i \subset f^{-1}(m)$. Set $\Omega = \prod_{i=1}^n S_i$ to get the claim.
\end{proof}
For each $i \in \{1,2,...,n\}$, let us take points $z^i \in S_i^{\times m+1}$. Note that for all $i \in \{1,2,...,n\}$, we have functions $\tilde{f}_i, \tilde{h} \in \text{Hom}_{\text{Set}}(k^{\times n},k)$ defined as follows:
\[\forall t \in k^{\times n}: \; \; \tilde{f}_i(t) = f_i(p_i(t))(\text{pr}_1(t),...,\text{pr}_n(t))\]
\[\forall t \in k^{\times n}: \; \; \tilde{h}(t) = \prod_{i=1}^n H(\text{pr}_i(t)).\]
We see that $(*)$ may be re-written as an equality
\[\sum^n_{i=1} \tilde{f}_i = \tilde{h},\]
of functions in $\text{Hom}_{\text{Set}}(k^{\times n}, k)$. For any $s \in \prod_{j \in \{1,2,...,n\} \setminus \{i\}} S_j$, we see
\[\tilde{f}_i \circ t^{s}_i: k \to k\]
is a polynomial of degree $\le m$. Therefore:
\[\nabla^{z^i}_i \tilde{f}_i = 0,\]
for all $i \in \{1,2,...,n\}$. Therefore:
\[ \prod_i^n \nabla^{z^i} H = (\nabla^{z^n}_n \circ \nabla^{z^{n-1}}_{n-1} \circ ... \circ \nabla^{z^1}_1)( \tilde{h}) = (\nabla^{z^n}_n \circ \nabla^{z^{n-1}}_{n-1} \circ ... \circ \nabla^{z^1}_1)(\sum_{i=1}^n \tilde{f}_i)=0.\]
This implies that there exists an $i \in \{1,2,...,n\}$ such that $\nabla^{z^i} H = 0$ for any such $z^i \in S_i^{\times m+1} \subset k^{m+1}$. But this would imply that $H$ is a polynomial of degree $m$ on the infinite subset $S_i \subset k$ (due to Corollary~\ref{corollary:interpol}), which is a contradiction.
\end{proof}
\begin{corollary}\label{corollary:algcup}
Let $n \in \bb{N} \cup \{\infty\}$, $F$ a field with $|F| = \beth_{(n-1)+}$, and $k=E(F)$. There exists a faithfully flat ring maps $k[x_1,...,x_n] \over{f_i}{\to} C_i$ for $i \in \{1,2,...,n\}$ such that if $\eta_i \in \text{Ext}^1_{k[x_1,...,x_n]}(\coker{f_i}, k[x_1,...,x_n])$ is the class of the extension $k[x_1,...,x_n] \over{f_i}{\to} C_i \to \coker{f_i}$, then:
\[\bigotimes_{i=1, k[x_1,...,x_n]}^m \eta_i \neq 0\]
for all $m < n+1$.
\end{corollary}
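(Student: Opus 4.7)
The strategy is to deduce the corollary from Theorem~\ref{theorem:countermod} by realizing the pushforward class $h_{*}\eta$ as the extension class of a genuine faithfully flat ring map, and then base-changing along the inclusions $t_i : R \to R_n$.

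First I invoke Theorem~\ref{theorem:countermod} to extract an $R$-linear map $h : \oplus_k R \to R$, where $R = k[x]$. Writing $H(s) := h(1_s) \in k$, the main step is to upgrade the $R$-module pushout of $f_{R,k}$ along $h$ to an $R$-algebra $C$ with $R \to C$ faithfully flat such that the resulting module exact sequence $0 \to R \to C \to \coker{f_{R,k}} \to 0$ has class $h_{*}\eta$. The ring structure comes from a Cech-type construction leveraging the pairwise coprimality $(x-s, x-s') = R$ guaranteed by indivisibility: on each affine $\text{Spec}(R[(x-s)^{-1}])$ one glues a trivial deformation, and the scalars $H(s)$ supply the cocycle data assembling them into a global ring. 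Flatness of $C$ over $R$ follows from the flatness of $\coker{f_{R,k}}$ (a consequence of the $R$-universal injectivity of $f_{R,k}$ built into the definition of indivisibility); faithfulness uses condition (ii) of indivisibility, which ensures $C \otimes_R R/(x-s) \neq 0$ for every $s \in k$.

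Once the ring map $f : R \to C$ is in hand, I base-change along each $t_i : R \to R_n$ to obtain faithfully flat ring maps $f_i : R_n \to C_i := R_n \otimes_{R, t_i} C$; faithful flatness is preserved. Functoriality of $\text{Ext}$ under base change and pushforward identifies the extension class of $f_i$ with $\eta_i' := (h \otimes_{R, t_i} R_n)_{*} \eta_i$, where $\eta_i$ is as in Construction~\ref{construction:classes}. Finally, compatibility of the cup product with these operations gives
\[
\bigotimes_{i=1, R_n}^m \eta_i' \;=\; \bigotimes_{i=1,R_n}^m (h \otimes_{R,t_i} R_n) \circ \eta_i,
\]
which is non-zero for every $m < n+1$ by Theorem~\ref{theorem:countermod}.

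The main obstacle is the middle step: producing the explicit $R$-algebra structure on $C$ and verifying faithful flatness. The remaining deductions are routine manipulations of $\text{Ext}$ classes under base change and cup product.
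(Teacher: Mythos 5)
Your overall strategy --- promote the module-level data from Theorem~\ref{theorem:countermod} to extension classes of genuine faithfully flat ring maps, then conclude by functoriality of the cup product --- matches the paper's, and your base-change and bookkeeping at the end are fine. The gap is exactly at the step you yourself flag as ``the main obstacle'': you assert that the pushout of $f_{R,k}$ along $h$ can be endowed with an $R$-algebra structure, i.e.\ that there is a faithfully flat ring map $R \to C$ whose underlying module extension is $0 \to R \to C \to \coker{f_{R,k}} \to 0$ with class \emph{exactly} $h_{*}\eta$ and with the \emph{same} cokernel. This is a substantially stronger statement than anything the paper proves, and your sketch (gluing trivial deformations over the cover by the $\mathrm{Spec}(R[(x-s)^{-1}])$ with the scalars $H(s)$ as cocycle data) does not substantiate it: a \v{C}ech-style gluing produces some algebra, but you give no reason why its unit section would sit in a short exact sequence with cokernel precisely $\coker{f_{R,k}}$ and class precisely $h_{*}\eta$ --- that class is defined by pushing out a direct-sum extension along $h$, not by a cocycle for that cover --- and there is no a priori reason that an arbitrary extension of a flat module by $R$ underlies a ring map at all.

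The paper circumvents precisely this difficulty with Proposition~\ref{proposition:algtomod}: given $\eta$ represented by $A \to N \to M$, it forms $B = \colim{m}\text{Sym}^m_A(N)$ with transition maps twisted by $\eta$. This \emph{changes the cokernel} from $M$ to a much larger flat module $M'$ and yields only a factorization $\eta = \eta^a \circ s_{\eta}$ through a comparison map $s_{\eta}: M \to M'$, rather than an algebra realizing $\eta$ on the nose. The corollary then needs one further (easy) observation that your plan omits: from $\bigotimes_i \tilde{\eta}^a_i \circ \bigotimes_i s_{\eta_i} = \bigotimes_i \tilde{\eta}_i \neq 0$ one deduces $\bigotimes_i \tilde{\eta}^a_i \neq 0$. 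To repair your argument, replace the \v{C}ech construction by this symmetric-algebra colimit (applied either to each class $(h \otimes_{R,t_i} R_n)\circ\eta_i$ over $R_n$, as the paper does, or to $h_{*}\eta$ over $R$ followed by base change along $t_i$), and weaken the claim about the resulting extension class accordingly.
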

\begin{proof}[Proof of \ref{corollary:algcup}]
Let $A=k[x_1,...,x_n]$. The output of Theorem~\ref{theorem:countermod} are flat $A$ modules $I_i$ and extension classes:
\[\tilde{\eta}_i \in \text{Ext}^1_{A}(I_i, A),\]
for all $i \in \{1,2,...n\}$, such that:
\[\bigotimes_{i=1, A}^{j} \tilde{\eta}_i \neq 0\]
for all $j < n+1$. Using Proposition~\ref{proposition:algtomod}, we obtain faithfully flat $A$-algebra maps $A \over{f_i}{\to} C_i$, and extensions $\tilde{\eta}^a_i \in \text{Ext}_A^1(\coker{f_i}, A)$, and maps $s_{\eta_i}: I_i \to \coker{f_i}$ such that:
\[\tilde{\eta}_i = \tilde{\eta}^a_i \circ s_{\eta_i}.\]
Therefore:
\[\bigotimes_{i=1, A}^{j} \tilde{\eta}^a_i \circ \bigotimes_{i=1, A}^{j} s_{\eta_i}= \bigotimes_{i=1, A}^{j} (\tilde{\eta}^a_i \circ s_{\eta_i})=\bigotimes_{i=1, A}^{j} \tilde{\eta}_i  \neq 0,\]
for all $j < n+1$. By setting $\eta_i= \tilde{\eta}^a_i$, we're done.
\end{proof}
We used the following proposition in the above proof.
\begin{proposition}\label{proposition:algtomod}
Let $A$ be a commutative ring and $M$ a flat $A$-module, and let $\eta$ be an element in $\text{Ext}^1_A(M,A)$, represented by a class $A \over{\eta}{\to} N \to M$. There exists an extension $A \over{\eta^a}{\to} B \to C$, with $C$ a flat $A$-module equipped with an $A$-linear morphism $s_{\eta}: N \to C$ such that $\eta^a$ is map of $A$-algebras, and $\eta = \eta^a \circ s_{\eta}$.
\end{proposition}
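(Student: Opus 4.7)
The plan is to realise $B$ as a quotient of the symmetric algebra on $N$. Let $e \in N$ denote the image of $1 \in A$ under $\eta : A \to N$, viewed also as an element of $\text{Sym}^1_A(N) \subset \text{Sym}_A(N)$, and set
\[B := \text{Sym}_A(N)/(e-1).\]
This is visibly an $A$-algebra via $A \to \text{Sym}^0_A(N) \to B$, and the composition $A \xrightarrow{\eta} N \hookrightarrow \text{Sym}^1_A(N) \to B$ agrees with the structure map because $e$ and $1$ are identified in $B$. The map $s_\eta$ will be defined as the composition $N \to B \twoheadrightarrow B/A =: C$; since $\eta(A)$ lands in $A \subset B$, $s_\eta$ factors through a map $\bar{s}_\eta : M \to C$.

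The core computation is the $A$-module structure of $B$. Comparison of universal properties gives $B \cong \colim{d}(\text{Sym}^d_A(N),\; e \cdot -)$. The short exact sequence $0 \to A \to N \to M \to 0$ equips each $\text{Sym}^d_A(N)$ with a natural filtration $F_p := e^{d-p} \cdot \text{Sym}^p_A(N)$ for $0 \le p \le d$, and flatness of $M$ yields $F_p/F_{p-1} \cong \text{Sym}^p_A(M)$. Multiplication by $e$ preserves this filtration and acts as the identity on each graded piece, so $B$ inherits an exhaustive filtration with $F_p B / F_{p-1} B \cong \text{Sym}^p_A(M)$; in particular $F_0 B = A$. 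Consequently, $\eta^a : A \to B$ is injective, and $C$ has associated graded $\bigoplus_{p \ge 1} \text{Sym}^p_A(M)$. Each $\text{Sym}^p_A(M)$ is $A$-flat by Lazard's theorem (write $M$ as a filtered colimit of finitely generated free modules, and note that $\text{Sym}^p$ commutes with filtered colimits), whence $C$ is $A$-flat.

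To conclude, I will verify $\eta = \eta^a \circ s_\eta$ by forming the pullback $P := B \times_C M$ of the extension $0 \to A \to B \to C \to 0$ along $\bar{s}_\eta$; this is an extension $0 \to A \to P \to M \to 0$ whose class is $\eta^a \circ \bar{s}_\eta$. The map $N \to P$ sending $n \mapsto (n, q(n))$, where $q : N \twoheadrightarrow M$ is the quotient, lies in the fibre product because $\bar{s}_\eta \circ q$ coincides with reduction modulo $A$, and it restricts to the identity on both $A$ and $M$; by the five lemma it is an isomorphism of extensions, yielding $\eta^a \circ s_\eta = \eta$. The principal technical obstacle is the identification $F_p/F_{p-1} \cong \text{Sym}^p_A(M)$: this is exactly where flatness of $M$ enters, via the standard fact that for an extension of flat modules the symmetric powers inherit a filtration with the expected flat graded pieces.
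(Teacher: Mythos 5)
Your construction coincides with the paper's: the $B=\text{Sym}_A(N)/(e-1)$ you define is, via the colimit description you give, exactly the paper's $\colim{n}\text{Sym}^n_A(N)$ along multiplication by $e=\eta(1)$, and both arguments prove flatness of $C$ by exhibiting an exhaustive filtration with graded pieces $\text{Sym}^p_A(M)$ (flat by Lazard) and then identify $\eta$ with $\eta^a\circ s_\eta$ by the five lemma applied to the pullback extension. The proof is correct and essentially the same as the paper's; the only presentational difference is that you filter each $\text{Sym}^d_A(N)$ directly, whereas the paper runs the snake lemma on the successive cokernels $M'_n=\coker{A \to \text{Sym}^n_A(N)}$.
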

\begin{proof}
Fix an extension $\cal{E}:=  0\to A \over{\eta}{\to} N \to M \to 0$ of $M$ by $A$. Define
\[B= \colim{n}\text{Sym}^n_A(N)\]
where the transition maps are induced by the map $N^{\otimes_A n} \hookrightarrow N^{\otimes_A n+1}$ given by $\eta_n=\text{id}_{N^{\otimes_A n}} \otimes_A \eta$; note that each $\eta_n$ is injective due to the flatness of $N$. We endow $B$ with the natural commutative algebra structure, note that we have a `twisted' unit map $A \over{\eta^{a}}{\hookrightarrow} B$ induced by $\eta$.\\
\indent Define $M' = \coker{A \to B}$, $s_{\eta}: M \to M'$ the induced $A$-linear map and $M'_n = \coker{A \to \text{Sym}^n_A(N)}$; note that $M'= \colim{n} M'_n$. We claim that $M'$ is flat. Note that we have an exact sequence:
\[0 \to \text{Sym}_A^{n}(N)\over{\eta_{n}}{\to} \text{Sym}_A^{n+1}(N) \to \text{Sym}_A^{n+1}(M) \to 0\]
An application of the snake lemma then gives a sequence:
\[0 \to M'_n \to M'_{n+1} \to \text{Sym}_A^{n+1}(M) \to 0\]
By Lazard's theorem, $\text{Sym}_A^{n+1}(M)$ is flat for each $n\ge 0$ as $M$ is flat. Since $M'_1 = M$, we obtain the claim by induction.\\
\indent Hence, $A \to B$ is a faithfully flat ring map (as the cokernel is a faithfully flat $A$-module). Moreover, by the five lemma we see $N \simeq (M \to M' \leftarrow B)$, so the morphism of extensions $\cal{E} \to \cal{E'}$ induces a pullback $\text{Ext}^1_A(M',A) \to \text{Ext}^1_A(M,A)$ taking $\eta^{a} \mapsto \eta$. More precisely, we see that $\eta^a \circ s_{\eta} = \eta$.
\end{proof}
To complete the construction of our counterexample, we note:
\begin{proposition}\label{proposition:cup}
Let $A \over{f_{s}}{\to} C_{s}$ be a sequence of faithfully flat ring maps for $s \in \bb{N}$, and $\eta_s$ be the class in $\text{Ext}^1_A(\coker{f_s}, A)$ corresponding to the exact sequence $A \over{f_s}{\to} C_{s} \to \coker{f_s}$. Let $f=\otimes_{s, A}f_s: A \to \otimes_s C_s$ and denote by $\eta \in \text{Ext}^1_A(\coker{f},A)$ the extension class of $A \to \otimes_s C_s \to \coker{f}$. Then:
\[\otimes_{s, A} \eta_{s} \neq 0 \implies \eta^{\otimes_{A}n} \neq 0\]
for all $0 < n < \infty$. Furthmore, $f$ is faithfully flat.
\end{proposition}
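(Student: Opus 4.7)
The plan is to use the natural inclusions $\iota_s : C_s \to B := \bigotimes_{s, A} C_s$, placing $c$ in the $s$-th factor and $1$ elsewhere. These let us realise each $\eta_s$ as a pullback of $\eta$ along an induced map $\bar{\iota}_s : M_s \to M$, and then functoriality of the tensor pairing on Ext makes $\eta_1 \otimes_A \cdots \otimes_A \eta_n$ the pullback of $\eta^{\otimes n}$ along $\bar{\iota}_1 \otimes_A \cdots \otimes_A \bar{\iota}_n$, so non-vanishing of the former forces non-vanishing of the latter.

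For the faithful flatness of $f$, write $B = \colim{m} B_m$ with $B_m := C_1 \otimes_A \cdots \otimes_A C_m$. Each $B_m$ is faithfully flat over $A$ as a finite tensor product of faithfully flat $A$-algebras, and the transition map $B_m \to B_{m+1}$, being the base change of $A \to C_{m+1}$, is universally injective because faithfully flat maps are pure. Filtered colimits of flat modules are flat, so $B$ is $A$-flat; and for any nonzero $A$-module $N$, the directed system $\{N \otimes_A B_m\}_m$ has injective transitions and nonzero terms, so $N \otimes_A B$ is nonzero. Hence $f$ is faithfully flat, which incidentally shows $M := \coker{f}$ and each $M_s := \coker{f_s}$ are flat, placing $\eta^{\otimes n}$ in the $\text{Ext}$ group as written.

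Now fix $n \ge 1$. For each $s \in \{1, \ldots, n\}$, the map $\iota_s$ is the base change of the faithfully flat $A$-algebra map $A \to \bigotimes_{t \neq s, A} C_t$ along $f_s$, hence is itself faithfully flat and satisfies $f = \iota_s \circ f_s$. Passing to cokernels gives a morphism of short exact sequences whose leftmost vertical is the identity on $A$, middle is $\iota_s$, and right is the induced map $\bar{\iota}_s : M_s \to M$. Functoriality of Ext then yields $\bar{\iota}_s^{*}(\eta) = \eta_s$ in $\text{Ext}^1_A(M_s, A)$.

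By naturality of the bilinear tensor pairing on Ext, the map $\bar{\iota}_1 \otimes_A \cdots \otimes_A \bar{\iota}_n : M_1 \otimes_A \cdots \otimes_A M_n \to M^{\otimes_A n}$ pulls back $\eta^{\otimes n}$ to $\eta_1 \otimes_A \cdots \otimes_A \eta_n$. Interpreting the hypothesis $\bigotimes_{s, A} \eta_s \neq 0$ as the non-vanishing of every finite truncation $\bigotimes_{s=1}^{n} \eta_s$, which is precisely the output of Corollary~\ref{corollary:algcup} with $n = \infty$, this pullback is nonzero, forcing $\eta^{\otimes n} \neq 0$. The main point requiring care is step one, namely that the infinite tensor $B$ is actually faithfully flat so that $M$ is flat; everything else is formal functoriality of cup products.
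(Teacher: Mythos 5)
Your proposal is correct and follows essentially the same route as the paper: you use the factorization $f = \iota_s \circ f_s$ to get induced maps on cokernels (the paper's $p_s$), pull back $\eta$ to $\eta_s$, and then use naturality of the tensor pairing to identify $\bigotimes_{s=1}^n \eta_s$ as a pullback of $\eta^{\otimes n}$, exactly as in the paper's proof. The only difference is that you spell out the colimit argument for faithful flatness of the infinite tensor product, which the paper dismisses as standard.
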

\begin{proof}The claim that $f$ is faithfully flat is standard.\\
\indent Let $C = \otimes_{s \in \bb{N}} C_s$, we have a commutative diagram:
\[\begin{tikzcd}
& C_{s}\arrow[d, "t_s"] \\
A \arrow[ru, "f_s"] \arrow[r, swap, "f"] & \otimes_{s \in S} C_s.
\end{tikzcd}\]
Let $p_s: \coker{f_s} \to \coker{f}$ be the induced map, fitting into a commutative diagram,:
\[\begin{tikzcd}
& C_{s}\arrow[d, "t_s"] \arrow[r]& \coker{f_s} \arrow[d, "p_s"]\\
A \arrow[ru, "f_s"] \arrow[r, swap, "f"] & \otimes_{s \in S} C_s \arrow[r] & \coker{f}
\end{tikzcd}\]
In particular, we have $\eta \circ p_s = \eta_s$, which then results in the equation:
\[\eta^{\otimes n} \circ ( \otimes_{s' \in S'} p_s) = \otimes_{s' \in S'} \eta_{s'},\]
for any subset $S' \subset \bb{N}$ with $|S'| = n$.\\
\end{proof}
\begin{corollary}\label{corollary:counter}
$F$ a field with $|F| = \beth_{\infty +}$, and $k=E(F)$. There exists a faithfully flat ring map $k[x_1,...,x_{\infty}] \to A_{\infty}$ that is not descendable.
\end{corollary}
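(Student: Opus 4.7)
The plan is to feed the output of Corollary~\ref{corollary:algcup} (applied in the countably infinite case $n = \infty$) into Proposition~\ref{proposition:cup} and read off non-descendability from the non-vanishing of every finite tensor power of the resulting extension class.

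First I would set $A = k[x_1, x_2, \ldots]$ and apply Corollary~\ref{corollary:algcup} with $n = \infty$; the hypothesis $|F| = \beth_{\infty+}$ is exactly the cardinality assumption $\beth_{(n-1)+}$ of that corollary in this case. This produces a countable family of faithfully flat ring maps $f_i : A \to C_i$ for $i \in \bb{N}$, together with extension classes $\eta_i \in \text{Ext}^1_A(\coker{f_i}, A)$, such that every finite tensor product $\bigotimes_{i=1}^m \eta_i$ is non-zero.

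Next I would form $A_\infty := \bigotimes_{i \in \bb{N}, A} C_i$ and let $f : A \to A_\infty$ be the induced ring map, with associated extension class $\eta \in \text{Ext}^1_A(\coker{f}, A)$. By Proposition~\ref{proposition:cup}, the map $f$ is faithfully flat, and the identity $\eta^{\otimes n} \circ \bigl(\bigotimes_{s \in S'} p_s\bigr) = \bigotimes_{s \in S'} \eta_s$ recorded in the proof of that proposition, specialised to the subset $S' = \{1, \ldots, n\} \subset \bb{N}$, combines with the non-vanishing of $\bigotimes_{i=1}^n \eta_i$ established in the previous step to force $\eta^{\otimes n} \neq 0$ for every $n \ge 1$.

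Finally, as recalled in the introduction, the ring map $f$ is descendable precisely when $\eta^{\otimes n} = 0$ for some positive integer $n$; the preceding step rules this out, so $f : k[x_1, x_2, \ldots] \to A_\infty$ is faithfully flat yet not descendable. The substantive difficulty has already been dispatched by Theorem~\ref{theorem:countermod} and Proposition~\ref{proposition:algtomod}, where the Erd\H{o}s--Rado and Lagrange interpolation input is used; in the present argument the only remaining task is to assemble the earlier corollaries correctly, so I do not anticipate any genuine obstacle beyond that bookkeeping.
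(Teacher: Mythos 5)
Your proposal is correct and follows exactly the same route as the paper: apply Corollary~\ref{corollary:algcup} in the case $n=\infty$ to produce the family $f_i$ with all finite tensor products of the $\eta_i$ non-vanishing, form the infinite tensor product, and invoke Proposition~\ref{proposition:cup} to conclude faithful flatness together with $\eta^{\otimes n}\neq 0$ for all $n$, hence non-descendability. No differences worth noting.
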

\begin{proof}
By Corollary~\ref{corollary:algcup}, for $i \in \bb{N}$, there are faithfully flat ring maps $f_i = k[x_1,x_2,...] \to A_i$ such that if $\eta_i \in \text{Ext}^1_{k[x_1,x_2,...]}(\coker{f_i}, k[x_1,x_2,...])$ is the class of the extension $k[x_1,x_2,...] \over{f_i}{\to} A_i \to \coker{f_i}$, then:
\[\otimes_{i=1, k[x_1,x_2,...]}^n \eta_i \neq 0, \; \; \forall n \in \bb{N}.\]
Let $A_{\infty}=\otimes_{i \in \bb{N}} A_i$ and $f=\otimes_{i} f_i: k[x_1,x_2,...] \to A_{\infty}$. Proposition~\ref{proposition:cup} implies that $f$ is faithfully flat and has the property that if $\eta \in \text{Ext}^1_{k[x_1,x_2,...]}(\coker{f}, k[x_1,x_2,...])$ is the class of the extension $k[x_1,x_2,...] \over{f}{\to} A_{\infty} \to \coker{f}$, then:
\[\eta^{\otimes_{k[x_1,x_2,...]} n} \neq 0, \; \; \forall n \in \bb{N}.\]
This means that $f$ is not descendable, as desired.
\end{proof}
\bibliographystyle{amsalpha}
\bibliography{ffdescent}
\end{document}